\documentclass[10pt,leqno]{article}

\usepackage{amssymb,amscd,amsthm,amsxtra}
\usepackage{dsfont,latexsym, color}
\usepackage{mathrsfs}
\usepackage{marvosym}
\usepackage{indentfirst}

%Hyperref
\usepackage{color}
\definecolor{citation}{rgb}{0.2,0.58,0.2} 
\definecolor{formula}{rgb}{0.1,0.2,0.6}
\definecolor{url}{rgb}{0.3,0,0.5} 
\definecolor{marrone}{rgb}{0.7,0.45,0.36} 
 \usepackage[colorlinks=true,linkcolor=formula,urlcolor=url,citecolor=citation]{hyperref}

%%%
\usepackage{accents}
\newlength{\dhatheight}

%%%
%
%  mathscr <-in uso
%
\renewcommand{\mathcal}{\mathscr}

%Theorems et co. Rivista
\newtheorem{lemma}{\indent\bf Lemma\;}[section]
\newtheorem{theorem}{\indent\bf Theorem\;}[section]

\newtheorem{definition}{\indent\bf Definition\;}[section]

\renewenvironment{proof}{\indent\rm {\it Proof.}\;}{\hfill $\square$ \\ \indent}

%Theorems et co. standard AMS
%\newtheorem{lemma}{\indent\bf Lemma}[section]
%\newtheorem{theorem}{\indent\bf Theorem}[section]
%\newtheorem{proposition}{\indent\bf Proposition}[section]
%\newtheorem{definition}{\indent\bf Definition}[section]
%\newtheorem{remark}{\indent\bf Remark}[section]
%\newtheorem{corollary}{\indent\bf Corollary}[section]
%%\renewenvironment{proof}{\indent\bf Proof.}{\hfill $\square$ \\ \indent}

%%%%%%%%%%%%%%%%%%%%%%%%%%%%%%%%%%%%%%%%%%%%%%%%%%%%%%%%%%%%%%%%
%             DON'T CHANGE THE LINES BELOW                     %
%%%%%%%%%%%%%%%%%%%%%%%%%%%%%%%%%%%%%%%%%%%%%%%%%%%%%%%%%%%%%%%%
                                                               %
\makeatletter                                                  %
%%--------section:                                             %
\renewcommand*{\@seccntformat}[1]{%                            %
  \csname the#1\endcsname\;-                                   %
}                                                              %
\renewcommand{\section}{\@startsection{section}{1}{0mm}        %
   {1.5\baselineskip}%         beforeskip                      %
   {1\baselineskip}%           afterskip                       %
   {\indent\normalfont\normalsize\bfseries}%                   %
   }                                                           %
%%%-------subsection:                                          %
\renewcommand*{\@seccntformat}[1]{%                            %
  \normalfont\bfseries\csname the#1\endcsname\;-               %
}                                                              %
\renewcommand\subsection{\@startsection                        %
  {subsection}{2}{0mm}%name, level, indent                     %
  {1.5\baselineskip}%          beforeskip                      %
  {1\baselineskip}%            afterskip                       %
  {\indent\normalfont\normalsize\itshape}}% style              %
%%%-------subsubsection:                                       %
\renewcommand*{\@seccntformat}[1]{%                            %
  \normalfont\bfseries\csname the#1\endcsname\;-               %
}                                                              %
\renewcommand\subsubsection{\@startsection                     %
  {subsubsection}{2}{0mm}%name, level, indent                  %
  {1.5\baselineskip}%          beforeskip                      %
  {1\baselineskip}%            afterskip                       %
  {\indent\normalfont\normalsize\texttt}}% style               %
\makeatother                                                   %
                                                               %
%%%%%%%%%%%%%%%%%%%%%%%%%%%%%%%%%%%%%%%%%%%%%%%%%%%%%%%%%%%%%%%%

%Insiemi numerici
\newcommand{\R}{{\mathds R}}

\newcommand{\Om}{{\Omega}}

\newcommand{\cc}{{\mathcal C}}
\newcommand{\kk}{{\mathcal K}}
\newcommand{\dx}{\,{\rm d}x}

\allowdisplaybreaks
\makeatletter
\DeclareRobustCommand*{\bfseries}{%
  \not@math@alphabet\bfseries\mathbf
  \fontseries\bfdefault\selectfont
  \boldmath
}
\makeatother

\hyphenation{par-ma su-pe-rio-re ga-gli-ar-do}
 
\begin{document}

\thispagestyle{empty}

%-- first page ------------------------ HEADLINE Riv.Mat.Univ.Parma
\vskip -8in
\begin{center}
 \rule{8.5cm}{0.5pt}\\[-0.1cm] 
 {\small To appear in \,{\it Riv. Mat. Univ. Parma  (N.S.)\,}}
%%%%%%%%%%%%%%%%%
%{\small Preprint
%}%ALERT%%
%%%%%%%%%%%%%%%%%
\\[-0.25cm] \rule{8.5cm}{0.5pt}
\end{center}
\vspace {2.2cm}

%-- first page ----------------------------- NAME of AUTHORS
\begin{center}
{\sc\large Giovanni Franzina} \ {\small and}
 \ {\sc\large Giampiero Palatucci}
\end{center}
\vspace {1.5cm}

%-- first page --------------------------------- TITLE
\centerline{\large{\textbf{ Fractional $p$-eigenvalues }}}

%-- first page --------------------------------- footnote:
\renewcommand{\thefootnote}{\fnsymbol{footnote}}

\footnotetext{
%% if necessary :
The first author has been supported by the 
\href{http://www-dimat.unipv.it/~erc_pratelli/}{ERC grant 258685 ``AnOptSetCon''}. 
The second author has been supported by the \href{http://prmat.math.unipr.it/~rivista/eventi/2010/ERC-VP/}{ERC grant 207573 ``Vectorial Problems''}.
}

\renewcommand{\thefootnote}{\arabic{footnote}}
\setcounter{footnote}{0}
%--------------------------------

\vspace{1,5cm}
\begin{center}
\begin{minipage}[t]{10cm}
%-- first page ------------------------- Abstract
\small{
\noindent \textbf{Abstract.}
We discuss some basic properties of the eigenfunctions of a class of nonlocal operators whose 
%typical
 model is the fractional $p$-Laplacian.
\medskip

%-- first page ------------------------- Keywords
\noindent \textbf{Keywords.} 
Nonlinear eigenvalues problems, nonlocal problem, fractional Laplacian, 
quasilinear nonlocal operators, Dirichlet forms, Caccioppoli estimates.
 \medskip

%-- first page ------------------------- AMS classification codes
\noindent \textbf{Mathematics~Subject~Classification~(2010):}
35J60, 35P30, 35R11.
}
\end{minipage}
\end{center}

\bigskip

\section{Introduction} 

This note is about eigenfunctions of some nonlocal operators of fractional order $s\in (0,1)$ and summability $p>1$. Namely,
we consider weak solutions $u$ of 
equation
\begin{equation}
\label{eqfF}
\displaystyle
-\mathcal{L}_{K}u = \lambda|u|^{p-2}u
\end{equation}
in a domain $\Omega\subset \R^n$ 
with the Dirichlet condition $u=0$ on 
$\mathcal{C}\Omega = \mathds{R}^n\setminus \Omega$, where
\[
\mathcal{L}_{K}u(x) = 2\int_{\mathds{R}^n}K(x,y)|u(y)-u(x)|^{p-2}
	(u(y)-u(x))\dx
\]
and $K$ belongs to a class of singular symmetric kernels modeled on the case
$K(x,y)=|x-y|^{-(n+sp)}$. The integral
is understood in the principal value sense.

Fractional eigenfunctions are related to the problem of minimizing the nonlocal Rayleigh quotient 
\begin{equation}\label{def_ray}
\displaystyle
\mathcal{R}(\phi) := \frac{\displaystyle\int_{\R^n}\!\int_{\R^n}K(x,y)|\phi(x)-\phi(y)|^{p}{\,{\rm d}x{\rm d}y}}
{\displaystyle\int_{\R^n}|\phi(x)|^p\dx}
\end{equation}
among all smooth functions $\phi$ 
compactly supported in a Lipschitz domain $\Omega$.
In the case when $K=|x-y|^{-(n+sp)}$, equation~\eqref{eqfF} becomes
\begin{equation}\label{eqll}
(-\Delta)^{s}_p u \, =\, \lambda |u|^{p-2}u,
\end{equation}
where the symbol $(-\Delta)^{s}_p$ denotes the fractional $p$-Laplacian operator.

After being investigated first in potential theory and harmonic analysis, fractional operators defined via singular integral are nowadays riveting great attention in different research fields related to PDEs with nonlocal terms. For an elementary introduction to this wide topic and a large list of related references we refer to~\cite{DPV12,FV13}. For a precise introduction about equation~\eqref{eqll},
the reader is referred to Lindgren and Lindqvist~\cite{LL13} who first studied this eigenvalue problem. In their paper, several remarkable
properties of eigenfunctions were proved for suitably large values of $p$. The limit case as $p$ goes to infinity was also derived.

\vspace{1mm}

Here, we discuss such problem for any $p>1$. We prove that, similarly as in the local case, also for the fractional $p$-Laplacian positive eigenfunctions uniquely correspond to the first eigenvalue, the one that is
obtained by minimizing the Rayleigh quotient (see Theorem~\ref{thm_uniqueness} below). Moreover, we deduce that all the positive fractional $p$-eigenfunctions corresponding to the first eigenvalue~$\lambda_1$ are proportional (see Theorem~\ref{cor_proportional} below).
Hopefully, that may turn out to be of some interest in view of possible further
results in this topic.

\vspace{1mm}

At variance with the usual {\it linear} fractional panorama, considering nonlocal operators whose kernel $K(x,y)$ is proportional
to $|x-y|^{-(n+sp)}$ leads both to nonlocal and  to {\it nonlinear} 
difficulties. In particular, one can not benefit from the strong 
$s$-harmonic extension of~\cite{CS07}. Tools as, for instance, the 
barriers and density estimates provided in~\cite{SV13b,PSV12}, or the 
commutator and energy estimates in~\cite{PP13,PS13} make use of
the linearity.
An adaptation of such techniques to the case
$p\neq 2$ is not trivial. Even the mere H\"older continuity of eigenfunctions is 
not a clear consequence of the definition of weak solutions of~\eqref{eqfF}, except for the trivial case when $p$ is so large
to make possible the use of Morrey's embedding.
In fact, despite the possibility of getting $L^p$ to $L^\infty$
via classical comparison arguments, the oscillation decay however is
hardly under control with local estimates, due to the nonlocal contributions in the integral.

\vspace{1mm}
On the other hand, 
the assumptions on the exponent $p$ can be considerably lowered
preserving the uniqueness of positive eigenfunctions.
In this note it is shown how to circumvent difficulties
presenting a proof which does not require any significant information
about the regularity of weak solutions of~\eqref{eqfF}.

The idea dates back to~\cite{BBL81} and its importance in homogeneous
nonlinear eigenvalue problems was noticed by Belloni and Kawohl in~\cite{BK02} (see also~\cite{BK04}). What matters for uniqueness
is the convexity of the Gagliardo-type seminorm $\|\cdot\|_{W^{s,p}}$ along suitable curves connecting pairs of positive functions. For a
detailed description of this mechanism in the local case $s=1$, we
refer to the recent paper~\cite{BF12} by Brasco and the first author.

\vspace{2mm}

The paper is organized as follows. In Section~\ref{sec_preliminary} below, we fix the notation by also providing some preliminary results. In Section~\ref{sec_estimates} we discuss some local and global estimates of 
eigenfunctions~$u$ to problem~\eqref{eqfF}.
Section~\ref{sec_uniqueness} is devoted to the proofs of our main results for the fractional $p$-eigenfunctions.

\vspace{2mm}
\section{Preliminaries}\label{sec_preliminary}

In this section we state the general assumptions on the quantities we are dealing with. We keep these assumptions throughout the paper.

\vspace{2mm}

Firstly, we recall that, for any $s\in (0,1)$ and any $p>1$, the fractional Sobolev spaces~$W^{s,p}(\R^n)$ is defined through the norm
$$
\displaystyle
\|u\|^p_{W^{s,p}(\R^n)} \, = \, \int_{\R^n}|u|^p\dx + \int_{\R^n}\!\int_{\R^n}\!\frac{|u(x)-u(y)|^p}{|x-y|^{n+sp}}{\,{\rm d}x{\rm d}y}.
$$
For a bounded domain $\Omega\subset\mathds{R}^n$ (here always assumed with Lipschitz boundary), the space $W^{s,p}(\Om)$ can be defined similarly, by replacing the domains of integrations with $\Omega$. The homogeneous fractional Sobolev spaces $W_0^{s,p}(\Om)$ is given by the closure of $C_0^{\infty}(\Om)$ with respect to the norm $\| \cdot \|_{W^{s,p}(\Om)}$. For further details on the fractional Sobolev spaces, we refer to~\cite{DPV12} and the references therein.

\vspace{2mm}

The {\it kernel} $K:\R^n\times \R^n \to [0,\infty)$ is a measurable function such that
\begin{eqnarray}\label{hp_k1}
&& K(x,y) = K(y,x) \ \text{for almost} \ x, y \in \R^n, \nonumber \\[1ex]
&& \lambda \leq K(x,y)|x-y|^{n+sp} \leq \Lambda \ \text{for almost} \ x, y \in \R^n,
\end{eqnarray}
for some $s\in (0,1)$, $p>1$,  $\lambda,\Lambda >1$.\footnote{As noticed in~\cite{Kas09}, the assumption in~\eqref{hp_k1} can be weakened as follows\vspace{-1mm} 
\begin{equation*} 
\lambda \leq K(x,y)|x-y|^{n+sp} \leq \Lambda \ \text{for almost} \ x, y \in \R^n \ \text{s.~\!t.} \ |x-y| \leq 1,
\end{equation*}
\begin{equation*} 
0\leq K(x,y)|x-y|^{n+\eta} \leq M \ \text{for almost} \ x, y \in \R^n \ \text{s.~\!t.} \ |x-y| > 1,
\end{equation*}
for some $s, \lambda, \Lambda$ as above, $\eta>0$ and $M\geq 1$. Also, the kernel symmetry can be dropped, as seen in~\cite{DKP13,DKP13b}.
}

\vspace{3mm}
For any $u$, $v\in W_0^{s,p}(\Om)$ we consider the functional 
\begin{equation}\label{def_E}
\mathcal{E}(u,v):= \int_{\mathds{R}^n}\!\int_{\mathds{R}^n} K(x,y)|u(x)-u(y)|^{p-2}(u(x)-u(y))(v(x)-v(y))\,{\rm d}x{\rm d}y,
\end{equation}
and the corresponding energy
\begin{equation}\label{def_energia}
\displaystyle
\mathcal{K}(u) = \int_{\R^n}\!\int_{\R^n}K(x,y)|u(x)-u(y)|^p{\,{\rm d}x{\rm d}y}.
\end{equation}
Moreover, we define a linear operator $\mathcal{L}$ such that, for any $u$ and $\eta$ sufficiently smooth, say e.~g. $u$, $\eta\in C^{\infty}_0(\Om)$ such that $u=\eta=0$ in $\cc\Om$,
\begin{equation*} 
-\langle \mathcal{L} u,\eta \rangle=\mathcal{E}(u,\eta),
\end{equation*}
where $\langle\cdot,\cdot\rangle$ denotes, as usual, the dual product in the distributional sense. Thus, 
%assuming the kernel $K$ to be regular enough, 
for any $u\in W_0^{s,p}(\Om)$ we have
\begin{eqnarray}\label{f2}
\mathcal{L}u(x) \!\! & = &\!\! P.~\!V.\int_{\mathds{R}^n} K(x,y)|u(y)-u(x)|^{p-2}(u(y)-u(x))\,{\rm d}y \nonumber\\
& = &\!\! \lim_{\varepsilon\to 0} \int_{\mathcal{C} B_\varepsilon(x)}
\int_{\mathds{R}^n} K(x,y)|u(y)-u(x)|^{p-2}(u(y)-u(x))\,{\rm d}y, \quad x\in \mathds{R}^n, 
\end{eqnarray}
up to a multiplicative constant; see, e.~\!g., Theorem 2.3 in~\cite{DKP13}. 
As usual, the symbol $P.~\!V.$ in the preceding formula means ``in the principal value sense''.

\vspace{2mm}

Let $\lambda>0$,
we are interested in the weak solutions $u\in W^{s,p}_0(\Om)$ to the following class of integro-differential problems  
\begin{equation}\label{pb1}
-\mathcal{L} u=\lambda |u|^{p-2}u \quad  \text{in }\Omega, 
\end{equation}
where the zero boundary condition is given in the whole complement of $\Omega$, as usual when dealing with  nonlocal operators.
To fix the ideas, one can keep in mind the case when $\mathcal{L}$ coincides with the fractional $p$-Laplacian operator $-(-\Delta)^s_p$, which, omitting a multiplicative constant $c=c(n,p,s)$, is given by
$$
(-\Delta)^s_p\, u(x)
\, =\, 
P.~\!V.\int_{\mathds{R}^n} \frac{|u(x)-u(y)|^{p-2} (u(x)-u(y))}{|x-y|^{n +sp}}{\,{\rm d}x{\rm d}y},
$$
for any $s\in (0,1)$ and any $p>1$; so that the equation in~\eqref{pb1} becomes
\begin{equation}\label{pb2}
\displaystyle
(-\Delta)^{s}_p u \, =\,  \lambda|u|^{p-2}u.
\end{equation} 
\vspace{1mm}

A function $u\in W_0^{s,p}(\Om)$ is a weak solution to \eqref{pb1} if it satisfies
$$
\mathcal{E}(u,\eta)\, =\, \lambda\int_{\R^n}|u|^{p-2}u \eta \, {\rm d}x,
$$
for all test function $\eta\in C^\infty_0(\Omega)$ such that $\eta=0$ in $\cc\Om$, where $\mathcal{E}$ is defined in~\eqref{def_E}.
Notice that weak solutions are defined in the whole space, since they are considered to be extended to zero outside~$\Om$.
Such weak solutions deserved a special name in the case when 
$\mathcal{L}$ coincides with the fractional $p$-Laplacian operator 
(see~\cite[Definition~6]{LL13}).
\begin{definition}\label{def_ee}
Let $s\in (0,1)$ and $p\in(1,\infty)$. A real number $\lambda$ is
said to be a {\rm ``fractional $p$-eigenvalue''}
if equation~\eqref{pb2} admits a non-trivial weak solution~$u\in W^{s,p}_0(\Omega)$. If that is the
case, $u$ is called a {\rm ``fractional $p$-eigenfunction''} associated
with $\lambda$.
\end{definition}

Note that eigenvalues are positive numbers. To see that,
just plug the eigenfunction $u$ itself in the weak formulation
of \eqref{pb2} and note that the corresponding eigenvalue $\lambda$
equals
 the Rayleigh quotient $\mathcal{R}(u)$. By the same argument,
eigenvalues are bounded from below, up to a power, by the best constant
for the embedding of $W^{s,p}_0(\Om)$ into $L^p(\Om)$. 
Since the latter is compact if $p>1$, we can conclude this section by recalling that there exists a nonnegative minimizer $u\neq0$ of~\eqref{def_energia}; see~\cite[Theorem~5]{LL13} and, also,~\cite[Theorem~2.3]{DKP13}.
\begin{lemma}\label{lem_sss}
Let $s\in (0,1)$ and $p>1$. Then there exists a nonnegative minimizer $u$ of~\eqref{def_energia} in $W_0^{s,p}(\Om)$ such that $u=0$ in $\cc \Omega$. Moreover, $u$ is a weak solution to problem~\eqref{pb1}.
\end{lemma}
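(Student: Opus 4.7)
The plan is to follow the direct method of the calculus of variations. Since the Rayleigh quotient $\mathcal{R}$ in~\eqref{def_ray} is $0$-homogeneous, it is equivalent to minimize $\mathcal{K}$ over the set
\[
\mathcal{M}=\bigl\{v\in W_0^{s,p}(\Om):\ v=0 \text{ in } \cc\Om,\ \|v\|_{L^p(\R^n)}=1\bigr\}.
\]
Denote $\lambda_1:=\inf_{v\in\mathcal{M}}\mathcal{K}(v)$. This infimum is finite and nonnegative since $\mathcal{K}\ge 0$ and $C_0^\infty(\Om)\subset\mathcal{M}$ after normalization. First I take a minimizing sequence $\{u_k\}\subset\mathcal{M}$. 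Using $||a|-|b||\le|a-b|$ one gets $\mathcal{K}(|u_k|)\le\mathcal{K}(u_k)$ and $\||u_k|\|_{L^p}=1$, so I may assume $u_k\ge 0$.

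Next, the lower bound on $K$ in~\eqref{hp_k1} ensures coercivity of $\mathcal{K}$ with respect to the Gagliardo seminorm, namely
\[
\lambda \,[u_k]_{W^{s,p}(\R^n)}^p \le \mathcal{K}(u_k)\le \lambda_1+1,
\]
so $\{u_k\}$ is bounded in $W^{s,p}(\R^n)$ (and supported in $\overline{\Om}$). By the compact embedding $W_0^{s,p}(\Om)\hookrightarrow L^p(\Om)$ (recalled in the paragraph before the statement), up to a subsequence $u_k\rightharpoonup u$ in $W_0^{s,p}(\Om)$, $u_k\to u$ in $L^p(\Om)$, and pointwise a.e.\ in $\R^n$. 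In particular $u\ge 0$, $u=0$ in $\cc\Om$, and $\|u\|_{L^p}=1$, so that $u\in\mathcal{M}$.

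To finish I need lower semicontinuity of $\mathcal{K}$. Passing to a further subsequence so that $u_k(x)\to u(x)$ a.e., Fatou's lemma applied to the nonnegative integrand $K(x,y)|u_k(x)-u_k(y)|^p$ (which converges pointwise a.e.\ on $\R^n\times\R^n$) yields
\[
\mathcal{K}(u)\le\liminf_{k\to\infty}\mathcal{K}(u_k)=\lambda_1.
\]
Hence $u$ is the required nonnegative minimizer. Finally, since $v\mapsto\mathcal{K}(v)$ and $v\mapsto\int_{\R^n}|v|^p\dx$ are Fr\'echet-differentiable on $W_0^{s,p}(\Om)$ with derivatives $p\mathcal{E}(u,\cdot)$ and $p\int|u|^{p-2}u\,\cdot\,\dx$ respectively, the Lagrange multiplier rule gives the Euler-Lagrange equation
\[
\mathcal{E}(u,\eta)=\lambda_1\!\int_{\R^n}|u|^{p-2}u\,\eta\dx \qquad\forall\,\eta\in C_0^\infty(\Om),
\]
so that $u$ is a weak solution of~\eqref{pb1} for $\lambda=\lambda_1$.

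The main technical point I expect is the lower semicontinuity step: because of the nonlocal double integral, one cannot simply invoke weak lower semicontinuity of a convex local functional. The cleanest route is the pointwise/Fatou argument above, which works precisely because $\mathcal{K}$ is an integral of a nonnegative integrand; alternatively, one may observe that $\mathcal{K}^{1/p}$ is a seminorm on $W_0^{s,p}(\Om)$ equivalent to the Gagliardo seminorm (by~\eqref{hp_k1}) and hence weakly lower semicontinuous by convexity, which yields the same conclusion.
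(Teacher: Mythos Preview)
Your argument is correct and follows essentially the same direct-method approach as the paper: bounded minimizing sequence via~\eqref{hp_k1}, compactness of the embedding, lower semicontinuity, and the Euler--Lagrange equation. The only cosmetic differences are that the paper invokes weak lower semicontinuity of norms (your alternative route) rather than Fatou, and it passes to the limit first and then argues the minimizer cannot change sign (using strictness of $|u(y)-u(x)|\ge \bigl||u(y)|-|u(x)|\bigr|$ where $u(x)u(y)<0$), whereas you replace $u_k$ by $|u_k|$ beforehand.
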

\begin{proof}
By Sobolev's inequality and assumption~\eqref{hp_k1} on the kernel $K$, any minimizing sequence  
is bounded in $W_0^{s,p}(\Om)$.
Since $p>1$, up to relabeling the sequence is converging to
a limit function $u$ strongly in $L^p(\mathds{R}^n)$ and weakly in
$W^{s,p}_0(\Omega)$. 
The fact that $u$ is a minimizer follows then by the weak lower
semicontinuity of norms. Moreover, by possibly passing to a subsequence,
one can assume the convergence to hold pointwise almost everywhere, thus the boundary condition is also satisfied.
To see that $u$ must not change sign, it is sufficient to notice that the inequality
$$
	\big|u(y)-u(x)\big| \, \ge \, \big||u(y)|-|u(x)|\big|
$$
is strict at almost all points $x,y$ such where $u(x)u(y)<0$. The last statement is standard, since~\eqref{pb1} is the Euler-Lagrange equation for the minimization of the Rayleigh quotient.
\end{proof}

\vspace{3mm}
\section{Local and global estimates}\label{sec_estimates}

Fractional eigenfunctions are bounded. A way of seeing that is to obtain the decay estimate for the level sets
\begin{equation}\label{levelsets}
  \int_{k}^{+\infty} |\{u>t\}| \,{\rm d}t 
  \, \le\, c\,k |\{u>k\}|^{1 + \varepsilon}
\end{equation}
for all $k>0$ with the exponent $\varepsilon = s p / n(p-1)$ and a constant $c>0$ which
depends on $n,p,s,\lambda,\Omega $. Even if an account for estimate~\eqref{levelsets} seems
not to be present anywhere in the literature, we prefer to skip the details of the proof, since
they follow verbatim the technique at one's disposal in the 
eigenvalue problem for the $p$-Laplacian, for which we refer to~\cite{Lin90,Lin92}. 
Due to~\eqref{levelsets}, a quantitative bound of the form
$$
	\| u \|_{L^{\infty}}
	 \le  \, C \| u\|_{L^{1}}
$$
can be obtained, see~\cite[Lemma 5.1, p. 71]{LU68}.

This kind of global bounds owe a lot to the very special features of the eigenvalue problem. Moreover, the bounds
are inherited from the Dirichlet condition $u=0$ on the complement of $\Omega$. When dealing with
equations like
\begin{equation}\label{ELeqn}
	-\mathcal{L}_{K}u = f,
\end{equation}
having right hand-side different from the nonlinearity considered in this note, one can however hope
for $L^{p}$ to $L^{\infty}$ estimates. In passing, we mention a result in this direction.

\begin{theorem}\label{DG}
Let  $0<s<1$, $sp<n$,
 $\Omega\subset\mathds{R}^{n}$ be a bounded open set and $f\in L^{\gamma}(\Omega)$
 for some $\gamma>n/sp$. If $u\in W^{s,p}_{0}(\Omega)$ solves equation
 \eqref{ELeqn} then
 \begin{align*}
& 
%\iint_{(\{u>k\}\cap B_{\varrho}(x_{0}))^{2}} 
\int_{\{u>k\}\cap B_{\varrho}(x_{0})}\int_{\{u>k\}\cap B_{\varrho}(x_{0})}  
	K(x,y)|u(y)-u(x)|^{p} \, {\rm d}x {\rm d}y & \\
& 	\qquad\quad
 \ \le \,
\frac{C}{(R-\varrho)^{p}} \int_{B_{R}(x_{0})\setminus B_{\varrho}(x_{0})} (u-k)_{+}^{p}\,{\rm d}x +
C \|f\|_{L^{\gamma}(\Omega)} \big| \{u>k\} \cap B_{R}|^{1-\frac{1}{\gamma}}
&
\end{align*}
for all $k>0$ and all balls $B_{\varrho}(x_{0})\subset B_{R}(x_{0}) \Subset \Omega$.
\end{theorem}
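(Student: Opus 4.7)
The plan is to establish this Caccioppoli-type estimate on super-level sets by the classical De Giorgi truncation testing, adapted to the nonlocal setting in the spirit of Di Castro--Kuusi--Palatucci. Specifically, I would choose a cut-off $\phi \in C_0^\infty(B_R(x_0))$ with $0 \le \phi \le 1$, $\phi \equiv 1$ on $\overline{B_\varrho(x_0)}$ and $|\nabla \phi| \le 2/(R-\varrho)$, and test the weak formulation of~\eqref{ELeqn} with the admissible function $\eta := (u-k)_+ \phi^p$, which belongs to $W^{s,p}_0(\Omega)$ because $u \in W^{s,p}_0(\Omega)$ and $\phi$ is compactly supported in $\Omega$. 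This yields $\mathcal{E}(u,\eta) = \int_\Omega f\eta\, dx$.

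Next I would rewrite $\mathcal{E}(u,\eta)$ by splitting $\R^n \times \R^n$ according to the sign of $u-k$ at $x$ and at $y$. On the ``diagonal'' region $(\{u>k\}\cap B_\varrho) \times (\{u>k\}\cap B_\varrho)$ one has $\phi(x)=\phi(y)=1$ and $\eta(x)-\eta(y) = u(x)-u(y)$, so the integrand equals exactly $K(x,y)|u(x)-u(y)|^p$, producing the double integral appearing on the left-hand side of the claim. On the remaining regions --- where either $\phi(x) \neq \phi(y)$ or one of the arguments lies outside $\{u>k\}$ --- I would appeal to the standard pointwise algebraic inequality controlling
\[
|u(x)-u(y)|^{p-2}(u(x)-u(y))\bigl((u-k)_+(x)\phi^p(x) - (u-k)_+(y)\phi^p(y)\bigr)
\]
from below by a positive multiple of $|(u-k)_+(x)-(u-k)_+(y)|^p \max\{\phi(x),\phi(y)\}^p$, up to a cross term that can be absorbed into the principal term via Young's inequality, leaving a remainder dominated by $(R-\varrho)^{-p}\int_{B_R \setminus B_\varrho}(u-k)_+^p\,dx$ thanks to the upper bound $K(x,y)|x-y|^{n+sp} \le \Lambda$.

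For the right-hand side I would apply H\"older's inequality,
\[
\Bigl| \int_\Omega f\eta\, dx \Bigr| \,\le\, \|f\|_{L^\gamma(\Omega)}\, \|\eta\|_{L^{\gamma'}(A_k \cap B_R)}, \qquad A_k := \{u>k\},
\]
and use the global boundedness of $u$ (already at our disposal via~\eqref{levelsets}) to estimate $\|\eta\|_{L^{\gamma'}} \le \|u\|_{L^\infty(\Omega)}\,|A_k \cap B_R|^{1-1/\gamma}$, which produces exactly the forcing term of the claim. Rearranging closes the argument.

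I expect the main obstacle to be the careful handling of the off-diagonal contributions, where either $u(x) > k \ge u(y)$ (or vice versa) or $\phi(x) \neq \phi(y)$: the algebraic identity above acquires a sign-definite but not purely positive structure, and one has to combine the symmetry $K(x,y)=K(y,x)$ with a delicate Young-type splitting to ensure the resulting error is genuinely controlled by the Caccioppoli right-hand side and \emph{not} by a runaway nonlocal ``tail'' integral over $\R^n \setminus B_R$. Here the hypothesis $sp<n$ and the kernel bound~\eqref{hp_k1} come into play to absorb such contributions. Once this algebraic step is in place, the remainder of the argument is essentially routine bookkeeping.
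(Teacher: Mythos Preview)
Your approach coincides with the paper's: the authors also test the weak formulation of~\eqref{ELeqn} with $\eta^{p}(u-k)_{+}$ for a standard cut-off $\eta$, and then explicitly skip all the remaining details, calling it ``a classical path based on Stampacchia's truncations'' and referring to~\cite{DKP13,DKP13b} for the nonlocal bookkeeping you describe.

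One small correction: your appeal to the $L^{\infty}$ bound ``already at our disposal via~\eqref{levelsets}'' is misplaced. Estimate~\eqref{levelsets} is derived specifically for the eigenvalue equation $-\mathcal{L}_{K}u=\lambda|u|^{p-2}u$, not for a generic right-hand side $f\in L^{\gamma}$, so it is not available in the setting of Theorem~\ref{DG}. To obtain the forcing term in the stated form you should instead treat $\int f\,(u-k)_{+}\eta^{p}$ directly by H\"older (and, if needed, Young with the fractional Sobolev embedding to absorb the $(u-k)_{+}$ factor back into the left-hand side), without presupposing that $u$ is bounded.
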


We skip the proof of Theorem~\ref{DG}, which follows a classical path based on Stampacchia's truncations and comparison with constants. Namely, one
considers the weak formulation of equation~\eqref{ELeqn} and plugs in as a test function
$\eta^{p}(u-k)_{+}$, where $\eta$ is a standard cut-off.
For a more detailed account about this topic and related questions in the fractional framework, we refer to the recent papers~\cite{DKP13,DKP13b}. Actually, fractional Caccioppoli estimates turned out recently to be of nice use in a slightly different context. The interested reader is referred to~\cite{Min07,Min11,DP13}.\\

Let us now turn to the matter. 
We want to prove the boundedness of eigenfunctions. The proof below is much in the spirit of classical elliptic
regularity. We point out that the linear case $p=2$ has been considered in~\cite[Proposition~4]{SV13c} and \cite[Proposition~7]{SV13}. In this direction, it is worth mentioning also the paper~\cite{SV13a} where a detailed theory for the linear fractional eigenfunctions has been discussed; see, in particular, Proposition~9 there.

For the sake of simplicity, from now on we suppose that $K(x,y)= |x-y|^{-(n+sp)}$; that is, the case when the operator coincides with the fractional $p$-Laplacian. The general case with $K$ satisfying~\eqref{hp_k1} will follow with no severe modification.

\vspace{2mm}

\begin{theorem}\label{thm_sup}
Let $s\in (0,1)$, $p>1$, and $u\in W_0^{s,p}(\Om)$ be a solution to~\eqref{pb1}. Then $u\in L^{\infty}(\R^n)$.
\end{theorem}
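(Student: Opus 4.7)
The plan is a Moser-type iteration, adapted to the nonlocal nonlinear setting, which upgrades the summability of $u$ from the fractional Sobolev exponent $L^{p^*}$, granted by $u\in W^{s,p}_0(\Om)$, all the way up to $L^\infty$. Since $\mathcal{L}$ is antisymmetric, $-u$ is also a weak solution of \eqref{pb1}, so it suffices to bound $u_+$; the analogous bound on $u_-$ follows by applying the argument to $-u$. I will focus on the delicate case $sp<n$; when $sp\ge n$ the fractional Morrey-Sobolev embeddings conclude either directly or after a single step of the scheme below.

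Given $\beta>0$ and $M>0$, I would test the weak formulation of~\eqref{pb1} against $\eta=\min\{u_+,M\}^{1+\beta}$, which is an admissible test function since it belongs to $W^{s,p}_0(\Om)\cap L^\infty$. The backbone is the algebraic inequality
\[
|a-b|^{p-2}(a-b)\bigl(a_+^{1+\beta}-b_+^{1+\beta}\bigr) \, \ge \, c_{\beta,p}\,\bigl|a_+^{(p+\beta)/p}-b_+^{(p+\beta)/p}\bigr|^p, \qquad a,b\in\R,
\]
for some $c_{\beta,p}>0$, verifiable case by case on the signs of $a,b$ and standard in the Moser-iteration literature for the fractional $p$-Laplacian (cf.~\cite{DKP13,DKP13b}). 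It allows one to compare the bilinear form appearing in the weak formulation with the Gagliardo-type seminorm of the composition, so that, after passing to the limit $M\to\infty$ by monotone convergence, one gets
\[
\mathcal{K}\bigl(u_+^{(p+\beta)/p}\bigr) \, \le \, c_{\beta,p}^{-1}\,\lambda\,\int_\Om u_+^{p+\beta}\dx.
\]

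Coupling this bound with the fractional Sobolev inequality $\|v\|_{L^{p^*}(\R^n)}^p \le C\,\mathcal{K}(v)$, which follows from the embedding $W^{s,p}_0(\Om)\hookrightarrow L^{p^*}(\Om)$ together with the lower ellipticity assumption on $K$ in~\eqref{hp_k1}, and setting $\kappa := p^*/p=n/(n-sp)>1$, one obtains the reverse-H\"older-type estimate
\[
\|u_+\|_{L^{\kappa(p+\beta)}(\Om)} \, \le \, \bigl(C\lambda\, c_{\beta,p}^{-1}\bigr)^{1/(p+\beta)}\, \|u_+\|_{L^{p+\beta}(\Om)}.
\]
Starting from $p+\beta_0 = p^*$, a summability level at which $\|u_+\|_{L^{p+\beta_0}}$ is already finite by Sobolev's embedding, and iterating with $p+\beta_{j+1} = \kappa(p+\beta_j)$, a telescoping product of constants controls $\|u_+\|_{L^{\kappa^j p^*}(\Om)}$ by $C\|u_+\|_{L^{p^*}(\Om)}$, uniformly in $j$; letting $j\to\infty$ yields $\|u_+\|_{L^\infty(\R^n)} \le C\|u_+\|_{L^{p^*}(\Om)} < \infty$, as wanted.

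The main obstacle will be the quantitative control of the constant $c_{\beta,p}$ in the pointwise inequality above as $\beta\to\infty$: one must ensure that $c_{\beta,p}^{-1}$ grows at most polynomially in $\beta$, so that the logarithmic sum $\sum_j \log(C\lambda\, c_{\beta_j,p}^{-1})/(p+\beta_j)$ converges along the geometric sequence $p+\beta_j=\kappa^j p^*$. This is a well-understood technical point in Moser iteration for fractional $p$-Laplace-type operators, and the remaining bookkeeping is standard.
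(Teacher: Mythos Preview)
Your proposal is correct but takes a genuinely different route from the paper. The paper runs a De Giorgi-type level-set iteration: it sets $w_k=(u-(1-2^{-k}))_+$, tests the equation with $w_{k+1}$, and uses only the elementary pointwise inequality
\[
|v(x)-v(y)|^{p-2}(v(x)-v(y))\bigl(v_+(x)-v_+(y)\bigr)\ \ge\ |v_+(x)-v_+(y)|^p
\]
(your algebraic inequality in the special case $\beta=0$) together with Sobolev, H\"older and Chebyshev to obtain the nonlinear recursion $U_{k+1}\le C^kU_k^{1+\alpha}$ for $U_k=\|w_k\|_{L^p}^p$; a preliminary scaling makes $U_0$ small enough for this to force $U_k\to0$, whence $u_+\le1$. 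Your Moser scheme instead tests with powers and bootstraps the integrability exponent geometrically. The trade-off is clear: the paper's approach never needs to track the $\beta$-dependence of $c_{\beta,p}$, at the cost of the Chebyshev step and the homogeneity reduction; your approach yields directly a quantitative bound $\|u_+\|_{L^\infty}\le C\|u_+\|_{L^{p^*}}$ but requires the polynomial-in-$\beta$ control on $c_{\beta,p}^{-1}$ that you (correctly) identify as the main bookkeeping point. One small imprecision: the borderline case $sp=n$ does not conclude ``after a single step'', since there is no finite critical exponent and one pass only reproduces $u_+\in L^q$ for all $q$; you still iterate, using the embedding $W^{s,p}_0\hookrightarrow L^q$ for some fixed large $q$ to get a gain factor $q/p>1$, exactly as the paper remarks.
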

\begin{proof}
If $sp> n$ the conclusion is a consequence of Morrey-Sobolev embedding (see \cite[Theorem 8.2]{DPV12}). Thus, from now on, we are supposing that $sp\leq n$. In order to prove the theorem,
it suffices to bound the positive part $u_{+}$ of $u$. Indeed, since $-u$ is also a solution, the same argument will give a bound for the negative part, too. It is enough to prove that
\begin{equation}\label{LtwoLinfty}
	\| u_{+}\|_{L^{\infty}} \le 1 \qquad \text{ if} \ \ \|u_{+}\|_{L^{p}} \le \delta, 
\end{equation}
where $\delta>0$ will be determined. Note that there is no restriction in that. Indeed,
the general case follows by a scaling argument, since equation~\eqref{pb1} is
homogeneous.
\vspace{1.5mm}

Now, for any integer $k \geq 1$, consider the function $w_k$ defined as follows
$$
w_k:= (u- (1 -2^{-k}))_+.
$$
By construction, $w_k\in W_0^{s,p}(\Om)$ and $w_k = 0$ a.~\!e. in $\cc\Om$. 
Notice also that the following inequalities
\begin{eqnarray}\label{eq_9star}
\displaystyle
w_{k+1}(x) \leq w_k(x)  
\  \ \text{a.~\!e. in} \ \R^n, \nonumber \\[-0.5ex]
\\[-0.5ex]
u(x) < (2^{k+1}-1)w_k(x) \  \ \text{for} \ x \in  \big \{ w_{k+1} >0 \big\}, \nonumber
\end{eqnarray}
and the inclusions
\begin{equation*}%\label{eq_6}
\big\{ w_{k+1} > 0 \big\} \subseteq \big\{ w_k > 2^{-(k+1)} \big\}
\end{equation*}
hold true for all $k\in\mathds{N}$.

\vspace{1mm}

The following general elementary fact is also helpful: if $v\in W^{s,p}_0(\Om)$, then
\begin{equation}\label{uguale}
\displaystyle
|v(x)-v(y)|^{p-2}
\big(v_+(x)-v_+(y)\big)\big(v(x)-v(y)\big) 
\, \geq \,
| v_+(x) - v_+(y)|^{p},
\end{equation}
for all $x, y \in \R^n$.
To check this, assume that $v(x) \geq v(y)$. There is no loss of generality in that, since the roles of $x$ and $y$ can
be interchanged. Then, one can reduce to the case when $x \in \{ v> 0\}$ and $y \in \{ v \le 0 \}$, as otherwise inequality~\eqref{uguale} is trivial. In such a case, \eqref{uguale} reads as
\[
	(v(x)-v(y))^{p-1}v(x)
	\, \ge\, v(x)^{p}
\]
which is correct since $v(y)\le0$ and $v(x)>0$.

\vspace{1.5mm}

Now, \eqref{LtwoLinfty} will be proved by a standard argument based on estimating the decay
of the quantity $U_k := \| w_k \|^p_{L^p}$.
On the one hand, in view of~\eqref{uguale} with $v=u-(1-2^{-k})$,
\begin{eqnarray*}
&& \!\!\! \!\!\!\! \!\!\!\! \!\!\|w_{k+1}\|^p_{{W}_0^{s,p}} \\[1ex]
&&\!\!\! \leq \int_{\R^n}\int_{\R^n}\frac{|u(x)-u(y)|^{p-2}
\big(u(x)-u(y)\big)\big(w_{k+1}(x)-w_{k+1}(y)\big)}
{|x-y|^{n+sp}}{\,{\rm d}x{\rm d}y}.
\end{eqnarray*}
Thus, by plugging $w_{k+1}$ as a test function in~\eqref{pb1} and using~\eqref{eq_9star}, one obtains
\begin{equation}\label{eq_9star2}
\displaystyle
\|w_{k+1}\|^p_{{W}_0^{s,p}} 
\leq \lambda
\int_{\{w_{k+1}>0\}} |u(x)|^{p-2}u(x)w_{k+1}(x)\dx\leq \lambda (2^{k+1}-1)^{p-1} U_{k}. 
\end{equation}
On the other hand, the left hand-side of the latter can be estimated from below by $U_{k+1}$
if (fractional) Sobolev embeddings are called into play.
At this stage, it is convenient to separately consider the case when $sp<n$ and that when $sp=n$.
We first consider the former, since the limiting case $sp=n$ only requires minor modifications.
By H\"older's Inequality (with exponents $p^\star/p$ and $n/(sp)$) and
fractional Sobolev imbedding (see, for instance, \cite[Theorem~6.7]{DPV12}) 
\begin{equation}\label{eq_10star}
\displaystyle
U_{k+1} \, \leq \, c \|w_{k+1}\|^p_{{W}_0^{s,p}} \big|\big\{ w_{k+1}>0\big\}\big|^{\frac{sp}{n}},
\end{equation}
where the constant $c>0$ only depends on $n,p,s$.
Note that the mere juxtaposition of inequalities~\eqref{eq_9star2} and~\eqref{eq_10star} is not enough to conclude,
since $U_{k+1}$ and $U_{k}$ both appear with the same exponent but the latter has a big factor in front. On the other hand, by~\eqref{eq_9star} and Chebychev's inequality, one has
\[
	|\{w_{k+1}>0\}|
	\,  \le \, 
	|\{ w_{k}>2^{-(k+1)}\}| 
	\, \le \, 2^{p(k+1)} U_{k}.
\]
Thus, 
\[
	U_{k+1} \, \le\,  c\lambda (2^{p(k+1)}U_{k})^{1+\frac{sp}{n}}.
\]
A similar conclusion can be drawn if $sp=n$. In this case H\"older inequality with different exponents
and the limit embedding $W^{s,p}_{0}\hookrightarrow L^{q}$ (with $q>1$) should be used. Hence,
whenever $sp\leq n$, an estimate of the form
\[
	U_{k+1} \, \le \, C^{k} U_{k}^{1+\alpha}, \qquad \text{for all } \ \ k\in\mathds{N} ,
\]	 
holds for a suitable constant $C>1$ and some $\alpha>0$. This will imply that
\begin{equation}\label{limit}
	\lim_{k\to\infty} U_{k} = 0
\end{equation}
provided that
\[
	\|u_{+}\|_{L^{p}} = U_{0}
	\,  \le \,
	 C^{-\frac{1}{\alpha^2}} =:\delta^p,
\]
as it is easily checked.
Since $w_{k}$ converges to $(u-1)_{+}$ pointwise almost everywhere in $\mathds{R}^{n}$,
from~\eqref{limit} we infer that that~\eqref{LtwoLinfty} holds as desired.
\end{proof}

To conclude this section, we point out that the proof above is based on the competition between $L^{p}$ 
and $W^{s,p}$ norms of the truncated eigenfunctions, just as in the local case. At variance with that, no
energy inequality was involved, though. This was possible due to the very special structure of the problem, which
allows for a control on the energy via the simple arithmetic relation~\eqref{uguale}. Moreover, no
localization was needed, due to the peculiar boundary conditions.

\vspace{3mm}
\section{Uniqueness of fractional $p$-eigenfunctions}\label{sec_uniqueness}

As mentioned in the introduction, the geodesic convexity property presented in~\cite{BF12} holds true for the fractional Gagliardo seminorm $\mathcal{K}$ defined by~\eqref{def_energia} when $K=|x-y|^{-(n+sp)}$. Indeed, we state and prove the following
\begin{lemma}\label{lem_313}
Let $s\in (0,1)$, $p>1$, and let $\kk$ be the functional defined by~\eqref{def_energia}. For any nonnegative functions $u, v \in W_0^{s,p}(\Om)$, consider the function~$\sigma_t$ defined by
\begin{equation}\label{def_sigma}
\displaystyle
\sigma_t(x) := \big( (1-t)v^p(x)+ t u^p(x) \big)^{\frac{1}{p}}, \quad \forall t \in [0,1].
\end{equation}
 Then
\begin{equation}\label{38star}
\displaystyle
\kk(\sigma_t) \leq (1-t) \kk (v)+t\kk(u), \quad \forall t \in [0,1].
\end{equation}
\end{lemma}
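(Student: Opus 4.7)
My plan is to reduce the integral inequality \eqref{38star} to a pointwise inequality of the form
\[
|\sigma_t(x)-\sigma_t(y)|^p \,\le\, (1-t)|v(x)-v(y)|^p + t|u(x)-u(y)|^p \qquad \text{for a.\,e.}\ x,y\in\R^n,
\]
which, once established, yields \eqref{38star} immediately by multiplying both sides by the nonnegative kernel $K(x,y)$ and integrating over $\R^n\times\R^n$. Because $\sigma_t$ is built from $u^p$ and $v^p$, no issue of integrability arises beyond that of $\mathcal{K}(u)$ and $\mathcal{K}(v)$.

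The core is therefore the following elementary inequality: for nonnegative reals $\alpha,\beta,\gamma,\delta$ and $t\in[0,1]$,
\[
\Bigl| \bigl((1-t)\alpha^p+t\gamma^p\bigr)^{1/p} - \bigl((1-t)\beta^p+t\delta^p\bigr)^{1/p} \Bigr|^{p} \, \le \, (1-t)|\alpha-\beta|^p + t|\gamma-\delta|^p,
\]
which produces the pointwise estimate above upon setting $\alpha=v(x)$, $\beta=v(y)$, $\gamma=u(x)$, $\delta=u(y)$.

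I plan to prove this scalar inequality by a reverse-triangle-inequality argument in $(\R^2,\|\cdot\|_{\ell^p})$. Specifically, consider the vectors
\[
X:=\bigl((1-t)^{1/p}\alpha,\,t^{1/p}\gamma\bigr), \qquad Y:=\bigl((1-t)^{1/p}\beta,\,t^{1/p}\delta\bigr).
\]
A direct computation gives $\|X\|_{\ell^p}^p=(1-t)\alpha^p+t\gamma^p$, $\|Y\|_{\ell^p}^p=(1-t)\beta^p+t\delta^p$, and $\|X-Y\|_{\ell^p}^p=(1-t)|\alpha-\beta|^p+t|\gamma-\delta|^p$. Since $\|\cdot\|_{\ell^p}$ is a genuine norm for $p\ge 1$, the reverse triangle inequality $|\|X\|_{\ell^p}-\|Y\|_{\ell^p}|\le \|X-Y\|_{\ell^p}$ holds; raising to the $p$-th power gives exactly the scalar inequality claimed.

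The only delicate point is making sure the pointwise inequality is actually valid \emph{everywhere} the integrand is defined: $\sigma_t$ is well defined a.\,e.\ because $u$ and $v$ are a.\,e.\ finite, and the estimate holds at every point where $u(x),u(y),v(x),v(y)$ are finite, which is a full-measure set. Thus no approximation or regularization step is required, and the main obstacle is really just the choice of the right vectors $X,Y$ that reveals the inequality as a reverse triangle inequality; once that is found, the rest is integration. I remark that the argument makes no use of the specific form $K(x,y)=|x-y|^{-(n+sp)}$: it works for any symmetric nonnegative kernel, and in particular for those satisfying \eqref{hp_k1}.
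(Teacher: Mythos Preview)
Your proof is correct and follows essentially the same route as the paper: both arguments recognize $\sigma_t$ as the $\ell^p$-norm of the vector $\bigl((1-t)^{1/p}v,\,t^{1/p}u\bigr)$, invoke the reverse triangle inequality $\bigl|\|\xi\|_{\ell^p}-\|\eta\|_{\ell^p}\bigr|\le\|\xi-\eta\|_{\ell^p}$ pointwise in $(x,y)$, and then integrate against the kernel. Your write-up is slightly more explicit about the pointwise step and the a.e.\ considerations, but the underlying idea is identical.
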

\begin{proof}
The proof is straightforward. Notice that
$$
\sigma_t \equiv \big\| \big( t^{\frac{1}{p}} u, (1-t)^{\frac{1}{p}}v \big) \big\|_{\ell^p},
$$
where $\|\cdot\|_{\ell^p}$ denotes the $\ell^p$-norm in $\R^2$.
Then,~\eqref{38star} follows by the triangle inequality
$$
\big| \|\xi\|_{\ell^p} - \|\eta\|_{\ell^p} \big| \, \leq \, \|\xi-\eta\|_{\ell^p},
$$
by taking $\xi = \big(t^{1/p}u(y), (1-t)^{1/p}v(y)\big)$ and $\eta =\big( t^{1/p}u(x), (1-t)^{1/p}x(y) \big)$ for any $x, y \in \R^n$ and integrating the resulting inequality against the fractional kernel on $\R^n\times\R^n$.
\end{proof}
\vspace{1.5mm}

Now, we are in the position to prove our main result, stated in the following
\begin{theorem}\label{thm_uniqueness}
Let $s\in (0,1)$, $p>1$ and $v\in W_0^{s,p}(\Om)$ be a solution to~\eqref{pb1} such that $v>0$ in $\Om$. Then
$$
\lambda \, = \, \lambda_{1,p}^s(\Om),
$$
where $\lambda_{1,p}^s(\Om)$ denotes the minimum of the fractional Rayleigh quotients $\mathcal{R}$ on $W^{s,p}_0(\Om)$, as defined in~\eqref{def_ray}.
\end{theorem}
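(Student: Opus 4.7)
The plan is to prove the equality $\lambda = \lambda_{1,p}^s(\Omega)$ by establishing both inequalities. The direction $\lambda \geq \lambda_{1,p}^s(\Omega)$ is immediate: testing the weak formulation of~\eqref{pb1} against $v$ itself yields $\mathcal{K}(v) = \lambda\,\|v\|_{L^p}^p$, hence $\mathcal{R}(v) = \lambda$, and since $v \ne 0$ lies in $W^{s,p}_0(\Omega)$ it is admissible in the Rayleigh quotient, forcing $\lambda \geq \lambda_{1,p}^s(\Omega)$. The heart of the proof is the reverse inequality, for which I will exploit the hidden convexity of Lemma~\ref{lem_313} along the curve $\sigma_t$ from~\eqref{def_sigma}, joining $v$ to a minimizer of~$\mathcal{R}$.

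By Lemma~\ref{lem_sss} there exists a nonnegative minimizer $u \in W_0^{s,p}(\Omega)$ of the Rayleigh quotient, satisfying $\mathcal{K}(u) = \lambda_{1,p}^s(\Omega)\,\|u\|_{L^p}^p$; by homogeneity of~\eqref{pb1} I may normalize so that $\|u\|_{L^p} = \|v\|_{L^p}$. The identity $(1-\theta)\sigma_{t_1}^p + \theta\,\sigma_{t_2}^p = \sigma_{(1-\theta)t_1 + \theta t_2}^p$ permits the application of Lemma~\ref{lem_313} to each pair $(\sigma_{t_1},\sigma_{t_2})$, showing that $G(t) := \mathcal{K}(\sigma_t)$ is convex on $[0,1]$. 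Convexity then yields
$$
G'(0^+) \, \leq \, G(1) - G(0) \, = \, \bigl(\lambda_{1,p}^s(\Omega) - \lambda\bigr)\|v\|_{L^p}^p .
$$

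The crucial step is to establish $G'(0^+) = 0$. Formally, $\partial_t \sigma_t|_{t=0} = (u^p - v^p)/(p\,v^{p-1})$, and differentiation under the integral sign in~\eqref{def_energia} gives $G'(0^+) = \mathcal{E}(v, (u^p - v^p)/v^{p-1})$. Using $\eta = (u^p - v^p)/v^{p-1}$ as a test function in the weak formulation of~\eqref{pb1} for $v$ then produces $\mathcal{E}(v,\eta) = \lambda\int_{\Omega} v^{p-1}\eta\,{\rm d}x = \lambda(\|u\|_{L^p}^p - \|v\|_{L^p}^p) = 0$ by the chosen normalization. Together with the convexity bound, this forces $\lambda \leq \lambda_{1,p}^s(\Omega)$, which combined with the first step proves the theorem.

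The hard part will be the justification of this derivative computation, since $\eta = (u^p - v^p)/v^{p-1}$ need not lie in $W_0^{s,p}(\Omega)$ when $v$ decays at $\partial\Omega$ faster than $u$, and one must also control the exchange of differentiation and integration in the definition of~$\mathcal{K}$. The plan is to regularize: replace $v$ by $v + \varepsilon$ in the denominator and truncate $u$ at a large level $M$, a manoeuvre made legitimate by the $L^\infty$ bound of Theorem~\ref{thm_sup}, establish the one-sided inequality $G'(0^+) \leq 0$ in the regularized setting, and then let $\varepsilon\to 0$ and $M\to \infty$ via monotone/dominated convergence. Alternatively, one can avoid differentiation entirely and argue at finite $t>0$, rewriting the chord inequality of Lemma~\ref{lem_313} in finite-difference form and invoking a fractional Picone-type estimate to bound $\mathcal{E}(v, (\sigma_t^p - v^p)/v^{p-1})$ from below directly.
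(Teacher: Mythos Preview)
Your proposal is correct and follows essentially the same route as the paper: hidden convexity along $\sigma_t$, $\varepsilon$-regularization to make the test function admissible, and the weak equation for $v$ to kill the first variation. The paper's execution differs only in that it regularizes the whole curve from the outset (setting $u_\varepsilon=u+\varepsilon$, $v_\varepsilon=v+\varepsilon$, $\sigma_t^\varepsilon=(t\,u_\varepsilon^p+(1-t)\,v_\varepsilon^p)^{1/p}$) and works at finite $t$: convexity of $\tau\mapsto|\tau|^p$ gives $\mathcal{K}(\sigma_t^\varepsilon)-\mathcal{K}(v)\ge p\,\mathcal{E}(v,\sigma_t^\varepsilon-v_\varepsilon)$, and $\sigma_t^\varepsilon-v_\varepsilon\in W^{s,p}_0(\Omega)$ is already a legitimate test function, so no differentiation under the integral is needed; one then sends $t\to0^+$ (Fatou, using concavity of $t\mapsto\sigma_t^\varepsilon$ for a lower bound) and $\varepsilon\to0^+$ (dominated convergence, using $v>0$). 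Two small remarks: in your last paragraph the one-sided inequality you need is $G'(0^+)\ge0$, not $\le0$; and truncating $u$ is unnecessary once you invoke Theorem~\ref{thm_sup}.
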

\begin{proof}
Assume that $v\in W^{s,p}_0(\Omega)$ is a strictly positive solution of \eqref{pb2}. There is no loss of generality if we assume that the function $v$ is normalized in $L^{p}(\Omega)$.
Let $u\in W^{s,p}_0(\Omega) $ be a solution of the minimum problem
\[
  \lambda_{1,p}^s(\Omega) = \min \bigg\{ \mathcal{K}(u,\Omega)\,\colon\, u\in W^{s,p}_0(\Omega), \int_\Omega |u(x)|^p\dx=1\bigg\}
\]

To simplify the notation a little, let $u_{\varepsilon}$ and $v_{\varepsilon}$ denote the functions
$ u+\varepsilon$ and $v+\varepsilon$, respectively. Set
\[
	\sigma_{t}^{\varepsilon}(x)  = \Big( t u_{\varepsilon}(x)^{p} + (1-t) v_{\varepsilon}(x)^{p}\Big)^{\frac{1}{p}}, \qquad x\in\Omega, t\in[0,1],
\]
By Lemma~\ref{lem_313}, $t\mapsto
\sigma_{t}^{\varepsilon}$ is a curve of functions belonging to $W^{s,p}(\Omega)$ along which the  
the energy is convex. Hence %alert invertire y-x con x-y (o viceversa)
\begin{align*}
	\int_{\mathds{R}^n}\!\int_{\mathds{R}^n}  &
	\frac{|\sigma_{t}^{\varepsilon}(x)-\sigma_{t}^{\varepsilon}(y)|^{p}}{
		|x-y|^{n+sp}}   {\,{\rm d}x{\rm d}y}
	-\int_{\mathds{R}^n}\!\int_{\mathds{R}^n} \frac{|v(x)-v(y)|^{p}}{
		|x-y|^{n+sp}} {\,{\rm d}x{\rm d}y}, 	
		 & \nonumber \\[1ex]
 & \:\qquad\le \, t \left(\int_{\mathds{R}^n}\!\int_{\mathds{R}^n}  \frac{|u(x)-u(y)|^{p}}{
		|x-y|^{n+sp}} {\,{\rm d}x{\rm d}y}
		-
		\int_{\mathds{R}^n}\!\int_{\mathds{R}^n}  \frac{|v(x)-v(y)|^{p}}{
		|x-y|^{n+sp}} {\,{\rm d}x{\rm d}y}\right)  \nonumber \\[1ex]
		& \:\qquad= \,  t\, \Big(\lambda_{1,p}^{s}(\Omega) -\lambda\Big) ,&
\end{align*}
for all $t\in[0,1]$ and all $\varepsilon\ll1$. By the (standard) convexity of the map $\tau\mapsto|\tau|^{p}$, the left hand-side in the latter can be estimated from below as follows
\begin{align*}
\int_{\mathds{R}^n}\!\int_{\mathds{R}^n}  &
	\frac{|\sigma_{t}^{\varepsilon}(x)-\sigma_{t}^{\varepsilon}(y)|^{p}}{
		|x-y|^{n+sp}}   {\,{\rm d}x{\rm d}y}
	-\int_{\mathds{R}^n}\!\int_{\mathds{R}^n}  \frac{|v(x)-v(y)|^{p}}{
		|x-y|^{n+sp}} {\,{\rm d}x{\rm d}y} 	
		 & \nonumber \\[1ex]
 & \qquad\qquad \qquad \ge \int_{\mathds{R}^n}\!\int_{\mathds{R}^n} 
 	\,\frac{|v(x)-v(y)|^{p-2}(v(y)-v(x))}{|x-y|^{n+sp}} \, {\rm d}x{\rm d}y
	\nonumber \\
 & \qquad\qquad \qquad \qquad \qquad \quad \times \Big( \sigma_{t}^{\varepsilon}(y)-
	\sigma_{t}^{\varepsilon}(x) - \big( v(y)-v(x) \big) \Big) {\,{\rm d}x{\rm d}y},&
\end{align*}
for all $t\in[0,1]$ and $\varepsilon\ll1$. Moreover,
since $u,v\in W^{s,p}_{0}(\Omega)$, the function $\sigma_{t}^{\varepsilon}$
also belong to $W^{s,p}(\Omega)$. Thus, it does make sense to plug $\phi
= \sigma_{t}^{\varepsilon}-v_{\varepsilon}$
as a test function into the Euler-Lagrange equation which holds for the eigenfunction $v$, whence the identity
\begin{align*}
\int_{\mathds{R}^n}\!\int_{\mathds{R}^n} 
 	\,\frac{|v(x)-v(y)|^{p-2}(v(y)-v(x))}{|x-y|^{n+sp}} & \Big( \sigma_{t}^{\varepsilon}(y)-
	\sigma_{t}^{\varepsilon}(x) - \big( v_{\varepsilon}(y)-v_{\varepsilon}(x) \big) \Big) {\,{\rm d}x{\rm d}y}, & \\
	& = \lambda \int_{\Omega}
	v(z)^{p-1}\,\Big( \sigma_{t}^{\varepsilon}(z)-v(z)\Big)\,{\rm d}z,
\end{align*}
follows for all $\varepsilon\ll 1$. Here the fact that $v(y)-v(x)=v_{\varepsilon}(y)-v_{\varepsilon}(x)$ was used. Thus, 
\[
	\lambda \int_{\Omega} v(z)^{p-1} \frac{\sigma_{t}^{\varepsilon}(z) - v_{\varepsilon}(z) }{t}\,{\rm d}z 
	\, \le\, 
	\lambda_{1,p}^{s}(\Omega)-\lambda,
\]
for all $t\in[0,1]$, and all $\varepsilon\ll 1$. Note that by the concavity of the $p$-th root,
the integrand in the latter is estimated pointwise almost everywhere in $\Omega$ from below by the function
\[
	v(z)^{p-1} \big( u(z)-v(z) \big),
\]
which does belong to $L^{1}(\Omega)$. Hence, we can apply Fatou's Lemma and get
\begin{align*}
\lambda\int_{\Omega}  \left( \frac{v(z)}{v(z)+\varepsilon} \right)^{p-1}\, & 
		\Big( u_{\varepsilon}(z)^{p}-v_{\varepsilon}(z)^{p} \Big)\,{\rm d}z & \\
		&
		\,  \le \, \lambda\liminf_{t\to0^{+}}
			\int_{\Omega} v(z)^{p-1} \frac{\sigma_{t}^{\varepsilon}(z) - v_{\varepsilon}(z) }{t}\,{\rm d}z 
\,				\le \, 
\lambda_{1,p}^{s}(\Omega)-\lambda, &
\end{align*}
for all $\varepsilon$ small enough, since
\[
\left.\frac{d}{dt}\right\vert_{t=0} \sigma_{t}^{\varepsilon}(z) = \frac{1}{p}v_{\varepsilon}(z)^{1-p}
	\Big( u_{\varepsilon}(z)^{p}-v_{\varepsilon}(z)^{p}\Big).
\]
Now comes the importance of the assumption $v>0$. By dominated convergence Theorem,
sending $\varepsilon\to0^{+}$
yields
\[
	\int_{\text{supp}(v)}\Big(u(z)^{p}-v(z)^{p}\Big){\rm d} z\, \le\, \lambda_{1,p}^{s}(\Omega)-\lambda.
\]
Since $\text{supp}(v)=\Omega$, by the normalization in $L^{p}(\Omega)$ of both the functions $u,v$ the left integral is equal to zero. Then
\[
	\lambda\le\lambda_{1,p}^{s}(\Omega)
\]
and the desired conclusion follows, since $\lambda_{1,p}^{s}(\Omega)$ is the least possible fractional $p$-eigenvalue and the converse inequality is obvious.
\end{proof}

We point out that the proof above does not require the functions $u, v$ to be continuous.
On the other hand, the fact that $v>0$ has to be {\em assumed}, unless a strong minimum principle for 
weak eigenfunctions is valid. In the case of the local $p$-Laplacian the latter is a consequence, for instance,
of Harnack inequality.  Since in this note we do not investigate about analogous results for the fractional
$p$-Laplacian, we prefer to keep $v>0$ as an assumption. However, we can similarly conclude with the following
%partial
 result about the first eigenvalue.

%As a consequence, one can prove the following

\begin{theorem}\label{cor_proportional}
Let $s\in (0,1)$ and $p>1$. Then all the positive eigenfunctions corresponding to $\lambda_{1,p}^s(\Om)$ are proportional.
\end{theorem}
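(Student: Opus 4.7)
The plan is to push the argument of Theorem~\ref{thm_uniqueness} to its equality case, by combining the hidden convexity of Lemma~\ref{lem_313} with the strict convexity of the $\ell^p$-norm for $p>1$. Let $u,v\in W_0^{s,p}(\Omega)$ be two strictly positive eigenfunctions corresponding to $\lambda_1:=\lambda_{1,p}^s(\Omega)$, normalized by $\|u\|_{L^p}=\|v\|_{L^p}=1$. With $\sigma_t:=((1-t)v^p+tu^p)^{1/p}$ as in Lemma~\ref{lem_313}, a direct computation gives $\|\sigma_t\|_{L^p}^p=(1-t)+t=1$, and $\sigma_t\in W_0^{s,p}(\Omega)$ since $u,v$ both belong there.

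Next I would chain three facts: admissibility of $\sigma_t$ in the Rayleigh quotient gives $\mathcal{K}(\sigma_t)\ge\lambda_1$; the fact that $u$ and $v$ are themselves minimizers gives $\mathcal{K}(u)=\mathcal{K}(v)=\lambda_1$; and Lemma~\ref{lem_313} gives $\mathcal{K}(\sigma_t)\le(1-t)\lambda_1+t\lambda_1=\lambda_1$. Hence the convex-combination estimate of Lemma~\ref{lem_313} is \emph{saturated} for every $t\in(0,1)$, and since its proof reduces (after integration against the kernel) to the a.e.\ pointwise bound
\[
|\sigma_t(x)-\sigma_t(y)|^p\,\le\,t|u(x)-u(y)|^p+(1-t)|v(x)-v(y)|^p,
\]
saturation of the integrals forces the pointwise inequality to be an equality for a.e.\ $(x,y)\in\mathbb{R}^n\times\mathbb{R}^n$.

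The third step is to unpack this equality. The pointwise estimate comes from the reverse triangle inequality $|\|\xi\|_{\ell^p}-\|\eta\|_{\ell^p}|\le\|\xi-\eta\|_{\ell^p}$ applied to $\xi=(t^{1/p}u(y),(1-t)^{1/p}v(y))$ and $\eta=(t^{1/p}u(x),(1-t)^{1/p}v(x))$. For $p>1$, $\ell^p$ is strictly convex, so equality in that inequality forces $\xi$ and $\eta$ to lie on a common ray from the origin. Using $u,v>0$ in $\Omega$, this gives a nonnegative factor $\mu(x,y)$ with $u(y)=\mu(x,y)u(x)$ and $v(y)=\mu(x,y)v(x)$ for a.e.\ $(x,y)\in\Omega\times\Omega$, whence
\[
\frac{u(y)}{v(y)}=\frac{u(x)}{v(x)}\quad\text{for a.e.\ }(x,y)\in\Omega\times\Omega.
\]
Fixing a Lebesgue point $x_0\in\Omega$ and setting $c:=u(x_0)/v(x_0)>0$, Fubini yields $u=cv$ a.e.\ in $\Omega$, and since both functions vanish on $\mathcal{C}\Omega$ this identity extends to all of $\mathbb{R}^n$, giving the desired proportionality.

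The step I expect to be most delicate is the extraction of the a.e.\ proportionality from the saturated integral inequality. Two things must be verified carefully: that equality of the two integrals really forces equality of the integrands a.e.\ (which is standard since the integrands satisfy the one-sided pointwise bound and the kernel is strictly positive), and that the strict-convexity equality case of $\ell^p$ truly yields the claimed non-negative proportionality of $\xi$ and $\eta$. On the other hand, pairs $(x,y)$ with $x\in\Omega$ and $y\in\mathcal{C}\Omega$ (or vice versa) need no special treatment: for such pairs the pointwise inequality is automatically an equality---since $\sigma_t(y)=0$ and $\sigma_t(x)^p=tu(x)^p+(1-t)v(x)^p$---so they carry no additional rigidity and cause no obstruction.
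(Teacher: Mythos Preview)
Your proof is correct and follows essentially the same route as the paper's: normalize, use Lemma~\ref{lem_313} together with minimality of $\lambda_{1,p}^s(\Omega)$ to force equality in the hidden convexity inequality, then invoke strict convexity of $\ell^p$ to deduce that $(u(y),v(y))$ and $(u(x),v(x))$ are proportional for a.e.\ $x,y$, whence $u/v$ is constant. Your write-up is in fact more explicit than the paper's (you spell out why $\|\sigma_t\|_{L^p}=1$, why integral equality forces pointwise equality, and why mixed pairs $(x,y)\in\Omega\times\mathcal{C}\Omega$ are harmless), but the argument is the same.
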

\begin{proof}
 Let $u,v$ be two positive normalized functions $W^{s,p}_0(\Omega)$ and $\sigma_t$ denote the usual constant speed geodesic connecting $u$ to $v$.
Recall the convexity inequality of Lemma~\ref{lem_313}
\[
  \mathcal{K}(\sigma_t) \le (1-t) \mathcal{K}(v) +  t\mathcal{K}(u).
\]
If the equality holds, then for almost all $x,y\in \R^n$ the triangle inequality
\[
	\big\vert\| \xi \|_{\ell^{p}}-\|\eta\|_{\ell^{p}}\big\vert \le \|\xi-\eta\|_{\ell^{p}}, 
\]
holds as an inequality with the choice
$$
\xi = \Big(t^{\frac{1}{p} } u(y),(1-t)^{\frac{1}{p}}v(y)\Big), \quad \eta = \Big(t^{\frac{1}{p} }u(x),(1-t)^{\frac{1}{p}}v(x)
\Big).
$$
Since $p>1$ there exists $\alpha(x,y)\in\R$ such that
\[
 u(y) = \alpha(x,y) u(x),\qquad v(y)=\alpha(x,y) v(x),
\]
for almost all $x,y\in \R^n$. Therefore
\[
	\frac{u(y)}{v(y)}= \frac{u(x)}{v(x)}
\]
and there is a constant
$\beta$ such that $u=\beta v$ almost everywhere.
\end{proof}

We would like to notice that the results above are partial, since first eigenfunctions are {\em assumed} to be strictly positive. A
strong minimum principle, keeping nonnegative eigenfunctions from vanishing anywhere, is
at one's disposal for any {\em continuous} weak solution $u$. Indeed, such eigenfunctions $u$ also solve the equation 
\eqref{pb2} in a viscosity sense (see~\cite{LL13}) and the implication
\[
	u\ge0 \Rightarrow u>0
\]	
easily follows by the definition of viscosity supersolutions, for which we refer to~\cite{LL13}.

\vspace{1mm}

\vspace{0.5cm} \indent {\bf
Acknowledgments.}\; The authors would like to thank Peter Lindqvist for the stimulating discussions.

\vspace{3mm}

\begin{center}

\end{center}

%-----------------------------------------------
% Authors (name, address, e-mail)
%-----------------------------------------------
\bigskip
\bigskip
\begin{minipage}[t]{10cm}
\begin{flushleft}
\small{
\textsc{Giovanni Franzina}
\\* Departiment Mathematik
\\* Universit\"at Erlangen-N\"urnberg
\\* Cauerstra\ss e~11
\\* 91058 Erlangen, Germany
\\*e-mail: \href{mailto:franzina@math.fau.de}{franzina@math.fau.de}
\\[0.4cm]
\textsc{Giampiero Palatucci}
\\* Dipartimento di Matematica e Informatica
\\* Universit\`a degli Studi di Parma 
\\* Campus - Parco Area delle Scienze,~53/A 
\\* 43124 Parma, Italia
\\*e-mail: \href{mailto:giampiero.palatucci@unipr.it}{giampiero.palatucci@unipr.it}

}
\end{flushleft}
\end{minipage}

%---------------------------------------------------- THE END

%
%\vspace{3mm}

\end{document}